\newcolumntype{L}[1]{>{\raggedright\let\newline\\\arraybackslash\hspace{0pt}}m{#1}}
\newcolumntype{C}[1]{>{\centering\let\newline\\\arraybackslash\hspace{0pt}}m{#1}}
\newcolumntype{R}[1]{>{\raggedleft\let\newline\\\arraybackslash\hspace{0pt}}m{#1}}
\def\ps@pprintTitle{%
\let\@oddhead\@empty
\let\@evenhead\@empty
\def\@oddfoot{\centerline{\thepage}}%
\let\@evenfoot\@oddfoot}
\newtheorem{theorem}{Theorem}[section]
\newtheorem{lemma}[theorem]{Lemma}
\newtheorem{proposition}[theorem]{Proposition}
\begin{document}

\begin{frontmatter}
\title{Besse projective spaces with many diameters}
\author{Ian Adelstein and Franco Vargas Pallete}
\address{Department of Mathematics, Yale University \\ New Haven, CT 06520 United States}

\begin{abstract} It is known that Blaschke manifolds (where injectivity radius equals diameter) are Besse manifolds (where all geodesics are closed). We show that Besse manifolds with sufficiently many diameter realizing directions are Blaschke. We also provide bounds in terms of diameter on the length of the shortest closed geodesic for pinched curvature metrics on simply connected manifolds. 
\end{abstract}
\begin{keyword} closed geodesics
\MSC[2010]  53C22 \sep 53C23
\end{keyword}
\end{frontmatter}

\section{Introduction}

In this paper we study manifolds all of whose geodesics are  closed, the so called Besse manifolds. The standard simply connected examples are the round spheres $S^n$, the complex and quaternionic projective spaces $\mathbb{C}P^{n/2}$ and $\mathbb{H}P^{n/4}$ with their Fubini-Study metric, and the Cayley projective plane $CaP^2$. There are many examples of non-standard Besse metrics on $S^n$ due to Zoll, Berger, Funk, and Weinstein \cite[Chapter 4]{besse}. It is unknown whether $\mathbb{C}P^{n/2}, \mathbb{H}P^{n/4}$, or $CaP^2$ admit non-standard Besse metrics. The real projective spaces $\mathbb{R}P^n$ do not admit non-standard Besse metrics \cite{lin}. 

A theorem of Bott-Samuelson \cite[Theorem 7.2]{besse} states that any simply connected Besse manifold has the integral cohomology ring of $S^n, \mathbb{C}P^{n/2}, \mathbb{H}P^{n/4}$, or $CaP^2$, its model space. Each of these spaces endowed with its standard metric is an example of a Blaschke manifold (one where injectivity radius equals diameter). It is known that Blaschke manifolds are Besse, and is conjectured that any Blaschke manifold is isometric to its model space; the conjecture is only resolved for spheres and real projective spaces \cite[Appendix D]{besse}. 

It is therefore interesting to consider when a Besse manifold is Blaschke. 
Consider a Besse metric on a homotopy $n$-sphere where every prime geodesic has length twice the diameter. If every point admits a diameter realizing direction then the manifold is Blaschke  \cite[Proposition 3.1]{sch}, and therefore round by the resolved Blaschke conjecture for spheres. 

Our main result demonstrates that a Besse manifold modeled on a projective space with sufficiently large smooth families of diameter realizing directions is Blaschke. Here we denote by $C(p) \subset M$ the cut locus of $p\in M$.

\begin{theorem}\label{main1}
Let $M^n$ be a Besse manifold with diameter one in which all prime geodesics have length two. Assume $M$ is homotopy equivalent to $\mathbb{C}P^{n/2},~ \mathbb{H}P^{n/4}$ or $CaP^2$. For every $p \in M$ assume there is a smooth closed submanifold $N^{n-k} \subseteq C(p)$ with $d(N,p)=1$ where $k = 2, 4$ or $8$, respectively. Then $N=C(p)$ and the manifold is Blaschke. 
\end{theorem}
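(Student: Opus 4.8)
The plan is to reduce the whole statement to a single fact about a ``midpoint map'' and then to prove that a certain preimage is both open and closed. Since $M$ is Besse with all prime geodesics of length $2$, every geodesic $\gamma_v$ issuing from $p$ is a smoothly closed loop with $\gamma_v(2)=p$ and $\gamma_v'(2)=\gamma_v'(0)$; in particular $\exp_p$ collapses the sphere of radius $2$ in $T_pM$ to the single point $p$, so $p$ is a point all of whose geodesics return to it, and the Allamigeon--Warner description of the tangent conjugate locus applies. I would introduce the midpoint map $\mu=\exp_p|_{U_pM}\colon U_pM\cong S^{n-1}\to M$, $\mu(v)=\gamma_v(1)$, where $U_pM$ is the unit tangent sphere at $p$. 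For every $v$ the arc $\gamma_v|_{[0,1]}$ has length $1$, so $d(p,\mu(v))\le 1=\operatorname{diam}(M)$, with equality exactly when $\gamma_v|_{[0,1]}$ is minimizing, in which case $\mu(v)$ is its cut point. Because each $q\in N$ lies at distance $1$, the minimizing geodesic to $q$ realizes $q=\mu(v)$ with cut distance $\rho(v)=1$, so $N\subseteq\mu(U_pM)$ and $\mu^{-1}(N)\neq\varnothing$. The theorem then reduces to proving $\mu^{-1}(N)=U_pM$: this forces $\mu(U_pM)=N$, hence every $\gamma_v(1)$ lies in $N\subseteq C(p)$ at distance $1$, the cut distance $\rho(v)$ is identically $1$, and $C(p)=\mu(U_pM)=N$. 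As $p$ is arbitrary this yields $\operatorname{inj}\equiv\operatorname{diam}$, i.e.\ $M$ is Blaschke.

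Next I would analyze $\mu$ along $N$. Fix $q_0\in N$ and $v_0\in\mu^{-1}(q_0)$. Since $d(p,q_0)=1$ is the maximum of $d(p,\cdot)$, the point $q_0$ is an interior maximum, so the unit forward velocities $u_\alpha=\sigma_\alpha'(1)$ of the minimizing geodesics $\sigma_\alpha$ from $p$ to $q_0$ positively span their linear span and $0$ lies in their convex hull. Moving along $N$ keeps $d(p,\cdot)$ at its maximal value, so $q_0$ is a minimum of each smooth branch $g_\alpha=d(p,\cdot)$ (along $\sigma_\alpha$) restricted to $N$, forcing $u_\alpha\perp T_{q_0}N$; thus the $u_\alpha$ lie in the $k$-dimensional normal space $\nu_{q_0}N$ and positively span it. The key step is to upgrade this to the \emph{local model}: $q_0$ is conjugate to $p$ along each $\sigma_\alpha$ with multiplicity exactly $k-1$, the set $\mu^{-1}(q_0)$ of minimizing directions is a round $S^{k-1}\subset U_pM$, and $d\mu_{v_0}$ has rank $n-k$ with image $T_{q_0}N$. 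I expect to obtain the multiplicity $k-1$ by feeding the second variation forced at the maximum (the Jacobi fields that must vanish at $t=1$ so that $d(p,\cdot)$ cannot exceed $1$ in the $k$ normal directions) into the Allamigeon--Warner decomposition of the first tangent conjugate locus into sheets of locally constant multiplicity, using the homotopy type of $M$ to pin the multiplicity to the model value $k-1$ corresponding to $k=2,4,8$.

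With the local model in hand, propagation is formal. By the locally constant multiplicity and rank supplied by Allamigeon--Warner, $\mu$ has constant rank $n-k$ near $v_0$, so the constant-rank theorem realizes a neighborhood of $v_0$ as a submersion onto an $(n-k)$-dimensional piece of $C(p)$; since this piece is smooth of the same dimension as $N$, shares the tangent space $T_{q_0}N$, and lies in the maximum locus $\{d(p,\cdot)=1\}$ which near $q_0$ equals $N$, it coincides with $N$. Hence $\mu^{-1}(N)$ is open. It is closed because $N$ is closed and $\mu$ is continuous, and it is nonempty; since $U_pM\cong S^{n-1}$ is connected, $\mu^{-1}(N)=U_pM$, completing the reduction of the first paragraph.

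The main obstacle is the middle step: rigorously forcing the conjugacy multiplicity $k-1$ at points of $N$. The soft maximum-principle argument only shows that the minimizing directions span, and positively span, a $k$-plane; converting ``smooth cut locus of codimension $k$ at maximal distance'' into ``first conjugate point of multiplicity $k-1$ with a round $S^{k-1}$ of minimizers'' is exactly where the closed-geodesic ($P_p$) structure, the Allamigeon--Warner sheets, and the prescribed homotopy type must all be combined, and where I anticipate the bulk of the work. A secondary point to handle with care is that the local submersion image must be shown to lie \emph{inside} $N$ rather than merely tangent to it, which I would deduce from $N$ being precisely the maximum locus of $d(p,\cdot)$ in a neighborhood of $q_0$.
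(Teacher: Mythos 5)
Your global reduction is sound as far as it goes: every length-one geodesic segment from $p$ ending in $N$ is automatically minimizing (its length equals $d(p,\cdot)=1$ there), so $\mu^{-1}(N)=U_pM$ would indeed force $\rho\equiv 1$, $C(p)=N$, and the Blaschke property. But the step you yourself flag as the main obstacle is a genuine gap, and the route you sketch for it would not close it. First, the soft maximum argument does not give that the velocities $u_\alpha$ of the minimizers positively span $\nu_{q_0}N$: criticality of $q_0$ for $d(p,\cdot)$ yields only $0\in\mathrm{conv}\{u_\alpha\}$, which together with the first-variation orthogonality is compatible with, say, a single antipodal pair of normal directions when $k\geq 2$; positive spanning of the full $k$-plane does not follow. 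Second, and more seriously, Allamigeon--Warner-type structure results (sheets of locally constant conjugate multiplicity, round-sphere fibers of $\exp_p$, constant rank of the midpoint map) are available under Blaschke-type hypotheses --- essentially constant first-conjugate distance --- which is the conclusion you are trying to reach; invoking them to get the local model is circular, and ``using the homotopy type of $M$ to pin the multiplicity to $k-1$'' is precisely the content that needs an argument, not a citation. Without the local model, your openness claim for $\mu^{-1}(N)$, the rank $n-k$ of $d\mu$, and the identification of the submersed image with $N$ are all unestablished, so the open--closed--connected argument never launches.

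The paper fills exactly this hole by different means, with no conjugate-point multiplicity analysis at all. Proposition~\ref{prop:morse} uses Morse theory on the path space: since $\pi_{k-1}(\Lambda(p,q))=\pi_k(M)=\mathbb{Z}$, a nontrivial $S^{k-1}$-class is homotoped into $\Lambda^1(p,q)$, using that geodesics of energy greater than four have index at least $n$ and that the critical values of $E$ are precisely the odd squares; this produces an $S^{k-1}$-family of \emph{minimizing} geodesics from $p$ to $q$ outright. Then a topological lemma shows that the induced map $h\colon S^{k-1}\to UT_q^{\perp}N$ (well defined by first variation) is \emph{surjective}: one extends $h$'s ``inverse data'' $G\colon UT_q^{\perp}N\to\Lambda(p,q)$ over a handle decomposition using $\pi_j(M)=0$ for $j<k$, so that a missed point of $UT_q^{\perp}N$ would retract $g$ to a constant, contradicting nontriviality. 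Hence \emph{every} unit normal to $N$ at $q$ exponentiates to a minimizer back to $p$, so $\exp(T_1^{\perp}N)$ is an open neighborhood of $N$ meeting $C(p)$ exactly in $N$, and connectedness of the cut locus gives $C(p)=N$. If you want to salvage your outline, replace the Allamigeon--Warner step with this pair of arguments; they deliver the full $S^{k-1}$ of minimizing normal directions that your positive-spanning and constant-rank claims were meant to produce.
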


We make some notes about our assumptions. First, a conjecture of Berger states that simply connected Besse manifolds have all prime geodesics of the same length. This has been resolved for $n$-spheres with $n = 2$ \cite{GG} and $n \geq 4$ \cite{RW}, and we assume here that all prime geodesics have the same length. Second, we do not require the full homotopy equivalence, only that the homotopy groups agree in dimensions $k$ and below, for $k = 2, 4$ or $8$, respectively. Note that Bott-Samuleson already guarantees the Besse manifold has the integral cohomology ring of $\mathbb{C}P^{n/2},~ \mathbb{H}P^{n/4}$ or $CaP^2$, and indeed goes further for the case of $\mathbb{C}P^{n/2}$ to guarantee the homotopy type. Finally, the assumption that every $p \in M$ admits a smooth $N^{n-k} \subseteq C(p)$ with $d(N,p)=1$ is precisely what we mean when we say the Besse manifold has sufficiently many diameter realizing directions. The standard metrics on the models all have such an $N^{n-k} = C(p)$. A priori our Besse manifolds could admit this $N^{n-k}$ as a proper subset of $C(p)$; we prove $N^{n-k} = C(p)$ and conclude that injectivty radius must equal diameter. 

As a brief proof sketch, for $p,q \in M$ with $d(p,q)=1$ we use that fact that $\pi_{k-1}(\Lambda(p,q))=\pi_{k} (M)=\mathbb{Z}$ to pick a nontrivial representative $\hat{g} \colon S^{k-1} \to \Lambda(p,q)$. The Besse condition (via some Morse theory) allows us to homotope this map to $g \colon S^{k-1} \to \Lambda^1(p,q)$ where $\Lambda^1(p,q)$ is the space of paths with energy less than or equal to one. We therefore have that diameter realizing points occur in $S^{k-1}$ families, and show that $T_qM$ can be orthogonally decomposed into vectors tangent to $N^{n-k}$ and initial velocity vectors of minimizing geodesics from $q$ to $p$. We then show that $N^{n-k} = C(p)$ and can conclude that injectivity radius equals diameter.

As a bridge to our next result, we note in the absence of the Besse assumption that bounds on sectional curvature have been applied when studying this class of manifolds. In \cite{SSW} it is shown that a simply connected manifold with ${\bf sec} \leq 1$ and every geodesic having a conjugate point at $t=\pi$ is isometric to $ S^n,~ \mathbb{C}P^{n/2},~ \mathbb{H}P^{n/4}$ or $CaP^2$. The proof first shows that such manifolds are Blaschke, and then applies a special case of the Blaschke conjecture. The assumption that \emph{every} geodesic has a conjugate point at $t=\pi$ is more restrictive than our assumption that a codimension $k$ subset of geodesics have a cut point at distance diameter, but the conclusion that the manifold is isometric to $ S^n,~ \mathbb{C}P^{n/2},~ \mathbb{H}P^{n/4}$ or $CaP^2$ is stronger than our conclusion that the manifold is Blaschke (and thus only conjecturally isometric to one of these space). 

A question that many authors have asked \cite{croke1, maeda, NR2002, Sab, Ade} is whether there exist constants $c(n)$ such that the length of the shortest closed geodesic $L(M^n)$ on a closed Riemannian manifold $M^n$ is bounded above by $c(n) D(M^n)$, where $D(M^n)$ is the diameter of the manifold. 
This question has a quick answer for non-simply connected manifolds:~the shortest non-contractible closed curve is a geodesic with length bounded above by $2D(M^n)$. 
Here we address this question for simply connected manifolds $M^n$ with metrics of pinched sectional curvature. We combine an upper bound on the length of the shortest closed geodesic in the pinched curvature setting due to Ballmann, Thorbergsson and Ziller \cite[Theorem 1.4]{btz1} with Klingenberg's lower bounds on injectivity radius and therefore diameter to yield the following:

\begin{theorem}\label{main} Let $M^n$ be a simply connected compact Riemannian manifold with sectional curvature $K$ satisfying $0< \delta \leq K \leq 1$. Then 

\begin{enumerate}
\item $L(M^n, g) \leq \frac{2}{\sqrt{\delta}} D(M^n, g) $ 
\item $L(M^n, g) \leq \frac{1}{\sqrt{\delta}} D(M^n, g)$ when $M^n$ is not homotopy equivalent to $S^n$.
\end{enumerate}
In either case, equality implies that $M^n$ is isometric to a symmetric space.
\end{theorem}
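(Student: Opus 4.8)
The plan is to combine the upper bound on the length $L := L(M^n,g)$ of the shortest closed geodesic from \cite[Theorem 1.4]{btz1} with a lower bound on the diameter $D := D(M^n,g)$ coming from Klingenberg's injectivity radius estimate, using throughout that $D \geq \operatorname{inj}(M^n)$. First I would record the two inputs in the normalization $0 < \delta \leq K \leq 1$. Since $K \leq 1$, the Rauch comparison theorem bounds the conjugate radius below by $\pi$, so Klingenberg's lemma $\operatorname{inj}(M^n) = \min\{\text{conjugate radius},\ \tfrac12 L\}$ gives
\[
D \;\geq\; \operatorname{inj}(M^n)\;\geq\; \min\!\Bigl\{\pi,\ \tfrac12 L\Bigr\}.
\]
On the other side, \cite[Theorem 1.4]{btz1} supplies $L \leq \tfrac{2\pi}{\sqrt\delta}$ in general and the sharper $L \leq \tfrac{\pi}{\sqrt\delta}$ when $M^n$ is not homotopy equivalent to $S^n$.

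With these in hand, both inequalities follow from a two-branch case analysis on the minimum above. For part (1): if $\tfrac12 L \leq \pi$ then $D \geq \tfrac12 L$, whence $L \leq 2D \leq \tfrac{2}{\sqrt\delta}D$ because $\delta \leq 1$; if instead $\tfrac12 L > \pi$ then $D \geq \pi$ and the general bound gives $L \leq \tfrac{2\pi}{\sqrt\delta} \leq \tfrac{2}{\sqrt\delta}D$. For part (2) the extra ingredient is the sphere theorem: a simply connected manifold with $\tfrac14 < K \leq 1$ is homeomorphic to $S^n$, so if $M^n$ is not homotopy equivalent to $S^n$ then necessarily $\delta \leq \tfrac14$, i.e.\ $\tfrac1{\sqrt\delta} \geq 2$. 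Then if $\tfrac12 L \leq \pi$ we get $L \leq 2D \leq \tfrac1{\sqrt\delta}D$, and if $\tfrac12 L > \pi$ the sharp bound gives $L \leq \tfrac{\pi}{\sqrt\delta} \leq \tfrac1{\sqrt\delta}D$.

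For the rigidity claim I would trace equality through each branch. In the branch $\tfrac12 L \leq \pi$, equality forces $L = 2D$, so that $\operatorname{inj}(M^n) = D$ and $M^n$ is Blaschke, and simultaneously forces $\tfrac2{\sqrt\delta} = 2$ in part (1) (hence $\delta = 1$, so $K \equiv 1$ and $M^n$ is the round sphere) or $\tfrac1{\sqrt\delta} = 2$ in part (2) (hence $\delta = \tfrac14$). In the branch $\tfrac12 L > \pi$, equality forces $D = \pi$ together with equality in the BTZ bound. It then remains to invoke the relevant rigidity theorems: constant curvature gives the round sphere immediately, while in part (2) the manifold is weakly quarter-pinched ($\tfrac14 \leq K \leq 1$), simply connected, and not homeomorphic to $S^n$, so by Berger's rigidity theorem it is isometric to a compact rank one symmetric space, and the equality case of \cite[Theorem 1.4]{btz1} yields the same conclusion when $D = \pi$.

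The genuine mathematical content is entirely carried by the two cited estimates, so the main obstacle is not the elementary arithmetic of combining them but rather marshalling the exact form of \cite[Theorem 1.4]{btz1}: one needs both the factor-two dichotomy between the sphere and non-sphere cases (which, paired with the sphere theorem, is exactly what produces the two constants $\tfrac2{\sqrt\delta}$ and $\tfrac1{\sqrt\delta}$) and its rigidity version in the high-$L$ branch, since there the conclusion cannot be extracted from the injectivity radius argument alone and must come from the equality analysis of the length estimate together with Berger-type pinching rigidity.
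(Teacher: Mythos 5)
Your proposal follows essentially the same route as the paper: both inequalities come from combining \cite[Theorem 1.4]{btz1} with Klingenberg's injectivity radius estimate via the dichotomy $\operatorname{inj}(M^n) \geq \min\{\pi, \tfrac{1}{2}L\}$ together with $D \geq \operatorname{inj}(M^n)$, and in case (2) the quarter-pinched sphere theorem supplies $\delta \leq \tfrac{1}{4}$, hence $2 \leq 1/\sqrt{\delta}$. That part of your argument is correct and complete, and matches the paper's case analysis exactly.

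The one genuine soft spot is the equality analysis in the branch $\tfrac{1}{2}L > \pi$, where you invoke ``the equality case of \cite[Theorem 1.4]{btz1}.'' No rigidity clause is part of that theorem as cited, so as written this step is unsupported --- and it is also unnecessary. In that branch $K \leq 1$ gives conjugate radius $\geq \pi$, and $\tfrac{1}{2}L > \pi$ then gives $\operatorname{inj}(M^n) \geq \pi$; since equality forces $D = \pi$, you get $\operatorname{inj}(M^n) = D$, i.e.\ $M^n$ is Blaschke, exactly as in your other branch. The paper then finishes both branches uniformly: in a Blaschke manifold every geodesic is closed of length twice the diameter, so $L = 2D$, and equality in (1) resp.\ (2) then pins down $\delta = 1$ resp.\ $\delta = \tfrac{1}{4}$; rigidity of the round sphere handles $\delta = 1$, and Berger's minimal diameter theorem \cite[Theorem 6.6]{Che} handles $\delta = \tfrac{1}{4}$ (you cite Berger rigidity correctly in your low branch). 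Note in passing that in the branch $\tfrac{1}{2}L > \pi$ the Blaschke conclusion $L = 2D = 2\pi$ contradicts $\tfrac{1}{2}L > \pi$, so that branch in fact contributes no equality cases at all. Replacing your appeal to BTZ rigidity with this Blaschke argument closes the gap and makes your proof coincide with the paper's.
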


We note that the bound $L(S^n, g) \leq \frac{2}{\sqrt{\delta}} D(S^n, g) $ is also achieved in \cite{Ade} via different techniques, albeit in the more restrictive setting of pinching constant $0.83 < \delta \leq K \leq 1$.

The simply connected manifolds $\mathbb{C}P^{n/2}, ~\mathbb{H}P^{n/4}$ and $CaP^2$ all admit metrics of positive sectional curvature. 
Our result applies to these metrics, and to the best of our knowledge this is the first diameter bound on the length of the shortest closed geodesic for this class of spaces. Moreover, these spaces (together with the sphere) are the only simply connected symmetric spaces admitting metrics of positive curvature. Therefore in the equality setting of the theorem we have that $M^n$ is isometric to one of $S^n, \mathbb{C}P^{n/2}, ~\mathbb{H}P^{n/4}$ or $CaP^2$.

This result is sharp in the following sense. 
In the first case, as the pinching constant $\delta$ approaches 1, the constant $c(n)$ approaches $2$, as would be expected by the limiting case of the round sphere. In the second case, when $M^n$ is not homotopy equivalent to $S^n$, we have that $\delta \leq 1/4$ by the quarter-pinched sphere theorem. As the pinching constant $\delta$ approaches $1/4$ the constant $c(n)$ approaches $2$, as would be expected by the limiting cases of the standard metrics on $\mathbb{C}P^{n/2}, ~\mathbb{H}P^{n/4}$ and $CaP^2$.

The paper proceeds as follows. In Section 2 we introduce the Morse theory that allows us to conclude that diameter realizing directions occur in $S^{k-1}$ families. The arguments follow closely those in \cite{sch} and \cite{lin}. In Section 3 we study the relationship between $N^{n-k} \subseteq C(p)$ and the minimizing geodesics back to $p$, concluding that $N^{n-k} = C(p)$ and finishing the proof of Theorem~\ref{main1}. Section 4 provides additional background on the diameter bound question and contains the proof of Theorem~\ref{main}.

\vspace{.1in}

\textbf{Acknowledgements:}~The authors would like to thank Wolfgang Ziller for suggesting the reference \cite{btz1} and helpful discussions about short closed geodeiscs in the pinched curvature setting. Vargas Pallete was supported by DMS-2001997.

\section{Morse Theory}

The goal of this section is to use Morse theory to show that diameter realizing points on Besse metrics on $ \mathbb{C}P^{n/2}, \mathbb{H}P^{n/4}$ or $CaP^2$ occur in $S^{k-1}$ families, where $k = 2, 4$ or $8$, respectively. This result is an extension of \cite[Proposition 3.1]{sch} which says that diameter realizing points on Besse metrics on $S^n$ occur in $S^{n-1}$ families, and therefore if $p \in S^n$ is a diameter realizing point its cut locus $C(p)$ is a singleton at distance diameter. 

\begin{proposition}\label{prop:morse}
Let $M^n$ be a Besse manifold with diameter one in which all prime geodesics have length two. Assume $M$ is homotopy equivalent to $ \mathbb{C}P^{n/2},~ \mathbb{H}P^{n/4}$ or $CaP^2$. If $d(p,q)=1$ then there exists a continuous map $g \colon S^{k-1} \to \Lambda^1(p,q)$ representing a nontrivial class in $\pi_{k-1}(\Lambda(p,q))=\pi_{k} (M)=\mathbb{Z}$ with $k = 2, 4$ or $8$, respectively. Note that the paths in this $S^{k-1}$ family are minimizing geodesics of length diameter between $p$ and $q$. 
\end{proposition}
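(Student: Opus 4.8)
The plan is to run Morse theory for the energy functional $E$ on the path space $\Lambda(p,q)$ and to push a generator of $\pi_{k-1}$ down to the bottom critical level. First I would record the homotopy identification: choosing a fixed path from $p$ to $q$ makes $\Lambda(p,q)$ homotopy equivalent to the based loop space $\Omega M$, so $\pi_{k-1}(\Lambda(p,q)) \cong \pi_{k-1}(\Omega M) \cong \pi_k(M)$, and since $M$ is homotopy equivalent to $\mathbb{C}P^{n/2}$, $\mathbb{H}P^{n/4}$ or $CaP^2$ this group is $\mathbb{Z}$ for $k = 2, 4$ or $8$ respectively (the first nontrivial homotopy group of the model in each case). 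I fix once and for all a map $\hat g \colon S^{k-1} \to \Lambda(p,q)$ representing a generator.

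Next I would set up the Morse theory. With $E(\gamma) = \int_0^1 |\dot\gamma|^2\,dt$, the Cauchy--Schwarz inequality gives $E(\gamma) \geq \mathrm{length}(\gamma)^2 \geq d(p,q)^2 = 1$, with equality exactly for the constant-speed minimizing geodesics; hence $\min E = 1$ and $\Lambda^1(p,q) = \{E \le 1\}$ is precisely the set of minimizing geodesics of length $1 = $ diameter. The critical points of $E$ are the geodesics from $p$ to $q$, and I claim their only possible lengths are the odd integers $1, 3, 5, \dots$. Indeed, any such geodesic extends to a closed geodesic $c$ of length $2$; were $q$ to sit at arc length $L_0 < 1$ from $p$ along $c$, that subarc would be shorter than $d(p,q) = 1$, a contradiction, so $p$ and $q$ are antipodal on every such $c$ and the geodesics between them have length $1 + 2j$, energy $(1+2j)^2$, for $j \geq 0$. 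In particular there are no critical values in the interval $(1, 9)$.

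The heart of the argument, and the step I expect to be the main obstacle, is the index estimate: \emph{every non-minimizing geodesic from $p$ to $q$ has Morse index at least $k$}. By the Morse index theorem this is a count, with multiplicity, of conjugate points of $p$ in the open segment. To control these without access to a model metric I would invoke the rigidity of the conjugate-point structure on a manifold all of whose geodesics through $p$ return to $p$: the Allamigeon--Warner theory of $P_p$-manifolds \cite{besse} forces the conjugate points along any geodesic to occur at the half-period (arc length $1$) and at the full period (arc length $2$), with the half-period multiplicity a topological invariant equal to $k-1$ for the relevant model. A geodesic of length $3$ then meets the antipodal conjugate locus at $t = 1$ and the closing conjugate locus at $t = 2$, contributing index at least $2(k-1) \geq k$, and longer geodesics only accumulate more. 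Pinning down this multiplicity from the Besse condition together with the homotopy type is the delicate point, and is where the arguments of \cite{sch} and \cite{lin} are adapted.

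Finally I would conclude by a standard deformation argument. Since every critical value above $1$ has critical set of index at least $k$, passing each such level attaches cells of dimension at least $k$, so each inclusion of sublevel sets is $(k-1)$-connected and in particular surjective on $\pi_{k-1}$. The compact image of $\hat g$ lies in some $\{E \le C\}$, and composing these connectivity statements shows that $\Lambda^1(p,q) = \{E \le 1\} \hookrightarrow \{E \le C\}$ is surjective on $\pi_{k-1}$. Therefore the class of $\hat g$ is realized by a map $g \colon S^{k-1} \to \Lambda^1(p,q)$ that is homotopic to $\hat g$ inside $\Lambda(p,q)$ and hence still represents the generator; as noted above, the paths in this family are minimizing geodesics of length equal to the diameter, as required.
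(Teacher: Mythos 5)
Your setup is sound: the identification $\pi_{k-1}(\Lambda(p,q)) \cong \pi_k(M) = \mathbb{Z}$ is the paper's starting point, and your antipodality argument showing that the critical values of $E$ on $\Lambda(p,q)$ are exactly the odd squares $(1+2j)^2$ is precisely the content of the cited Lemma 2.2 of Lin--Schmidt. But your concluding deformation step has a genuine gap: the claim that ``passing each such level attaches cells of dimension at least $k$, so each inclusion of sublevel sets is $(k-1)$-connected'' is the Morse-theoretic picture for \emph{nondegenerate} critical points, and nondegeneracy fails here. Indeed $p$ and $q$ may be conjugate along geodesics between them --- in the model spaces $q$ is conjugate to $p$ with multiplicity $k-1$ along every minimizing geodesic, and the critical sets are positive-dimensional manifolds --- so $E$ on $\Lambda(p,q)$ is never a Morse function in this setting, and your crossing of the levels $9, 25, \ldots$ is unjustified without Morse--Bott or Gromoll--Meyer hypotheses you have not verified. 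This is exactly the issue the paper flags as requiring ``extra work'': it never crosses a critical level in $\Lambda(p,q)$ at all. Instead it picks an auxiliary endpoint $z$ with $p$ not conjugate to $z$ along any geodesic and $d(q,z) = \epsilon < \min\{1/2, \mathrm{inj}(q)\}$, so that $E$ on $\Lambda(p,z)$ \emph{is} nondegenerate; there it deforms a generator down to $\Lambda^{4.1}(p,z)$ using Klingenberg's deformation theorem, then concatenates a short segment $\tau$ from $z$ to $q$ to get $\hat g = \tau f$ with $E(\hat g(\theta)) \le 8.7 < 9$, and finally flows down through the critical-value-free interval $(1,9)$ --- a step needing no nondegeneracy --- to land in $\Lambda^1(p,q)$.

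A second, smaller defect: your key index estimate (every non-minimizing geodesic has index $\ge k$, via an Allamigeon--Warner multiplicity count at the half-period) is asserted rather than proved --- you yourself call it the delicate point --- and deriving that conjugate-point structure from the Besse condition plus the homotopy type is not routine. The paper does not need it: it cites the established lemma (Lemma 3.2 of Adelstein--Schmidt, Lemma 2.4 of Lin--Schmidt) that geodesic segments of energy greater than four have index at least $n$, which is both stronger than index $\ge k$ and already proved in the references. So while your outline captures the intended Morse-theoretic strategy and correctly derives the critical-value spacing, the two load-bearing steps --- the index bound and, more seriously, the deformation through possibly degenerate critical levels --- are exactly where the paper's actual proof deviates from yours, and your version as written does not go through.
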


We review some well known preliminaries before proving the proposition. Given $(p,q) \in M \times M$, let $\Lambda(p,q)$ denote the space of piecewise smooth paths $c:[0,1]\rightarrow M$ with $c(0)=p$ and $c(1)=q$.  Define the length and energy functions $$L: \Lambda(p,q) \rightarrow [0,\infty)\,\,\,\,\,\text{and}\,\,\,\,\,E:\Lambda(p,q) \rightarrow [0,\infty)$$  by $$L(c)=\int_0^1 ||\dot{c}(t)||dt\,\,\,\,\, \text{and}\,\,\,\,\, E(c)=\int_0^1 ||\dot{c}(t)||^2dt.$$ The Cauchy-Schwartz inequality implies $$L^2(c) \leq E(c)$$ with equality holding if and only if $c$ has constant speed $||\dot{c}(t)||$.  Given $e \in [0,\infty)$, let $$\Lambda_e(p,q)=E^{-1}([0,e))\,\,\,\,\, \text{and}\,\,\,\, \Lambda^e(p,q)=E^{-1}([0,e]).$$

A path is a critical point for $E$ if and only if the path is a geodesic.  The index of a critical point $\gamma \in \Lambda(p,q)$ equals the number of parameters $s \in (0,1)$ for which $\gamma(s)$ is conjugate to $\gamma(0)$ along $\gamma$, counted with multiplicities.

\begin{proof}
Because $M$ is homotopy equivalent to $ \mathbb{C}P^{n/2},~ \mathbb{H}P^{n/4}$ or $CaP^2$ we have that $\pi_{k-1}(\Lambda(p,q))=\pi_{k} (M)=\mathbb{Z}$ for $k = 2, 4$ or $8$, respectively, and can choose a nontrivial continuous representative $\hat{g} \colon S^{k-1} \to \Lambda(p,q)$. The idea is to homotope $\hat{g}$ to the desired map $g \colon S^{k-1} \to \Lambda^1(p,q)$. This homotopy relies on a number of previously established lemmas:~the fact that geodesic segments on $M^n$ of energy greater than four have index at least $n$ \cite[Lemma 3.2]{sch} and \cite[Lemma 2.4]{lin}; the fact that we can homotope through critical points of index at least $k$  \cite[Theorem 2.5.16]{kling} as applied in the proofs of \cite[Proposition 3.1]{sch} and \cite[Theorem 1]{lin}; and the fact that the critical values of $E \colon \Lambda(p,q) \to \mathbb{R}$ are precisely the squares of odd integers when $d(p,q)=1$ \cite[Lemma 2.2]{lin}. 

The proof requires some extra work to address the possibility that $p$ and $q$ may be conjugate along some geodesic. Choose $z\in M$ such that $p$ is not conjugate to $z$ along any geodesic and $\epsilon \vcentcolon = d(q,z) < \min \{1/2, \text{inj}(q) \}$. This choice ensures that critical points of $E \colon \Lambda(p,z) \to \mathbb{R}$ are nondegenerate. 
Let $\hat{f} \colon S^{k-1} \to \Lambda(p,z)$ be a nontrivial representative of $\pi_{k-1} (\Lambda(p,z)) = \pi_k (M)= \mathbb{Z}$. 
In the absence of degenerate critical points (and using the fact that geodesic segments of energy greater than four have index at least $n$) we homotope $\hat{f}$ through these high index critical points to an intermediate map  $f \colon S^{k-1} \to \Lambda^{4.1}(p,z)$, c.f.~\cite[Theorem 2.5.16]{kling}.

For $c \in \Lambda(p,z)$ and $\tau:[0,\epsilon] \rightarrow M$ a minimizing geodesic with $\tau(0)=z$ and $\tau(\epsilon)=q$ we define $$\tau* c\in \Lambda(p,q)$$ by  \[ 
\tau* c(t)=
\begin{cases}
c(\frac{t}{1-\epsilon}) & \text{for } t\in [0,1-\epsilon]\\
\tau(t-(1-\epsilon)) & \text{for } t\in[1-\epsilon,1]\\
\end{cases}
\]  so that $$E(\tau*c)=\frac{E(c)}{1-\epsilon}+\epsilon.$$ Given a map $f:S^{k-1} \rightarrow \Lambda(p,z)$, the map $\tau f:S^{k-1}\rightarrow \Lambda(p,q)$ defined by $\tau f(\theta)=\tau*f(\theta)$ for each $\theta \in S^{n-1}$, represents a nontrivial homotopy class of maps if and only if $f$ represents a nontrivial homotopy class of maps.

Finally define $\hat{g}:S^{k-1} \rightarrow \Lambda(p,q)$ by $\hat{g}=\tau f.$ For each $\theta \in S^{k-1}$, $$E(\hat{g}(\theta))=\frac{E(f(\theta))}{1-\epsilon}+\epsilon\leq 2E(f(\theta))+\frac{1}{2}\leq 8.7<9.$$ To conclude the proof we use the fact that the critical values of $E \colon \Lambda(p,q) \to \mathbb{R}$ are precisely the squares of odd integers when $d(p,q)=1$ \cite[Lemma 2.2]{lin}, and homotope $\hat{g}  \colon S^{k-1} \to \Lambda^{8.7}(p,q)$ through the critical value free interval $(1,9)$ to the desired map $g \colon S^{k-1} \to \Lambda^1(p,q)$. 
\end{proof}

\section{Decomposing the Tangent Space}

In this section we use results from differential topology to show how our assumptions, together with the Morse theoretic result, can be used to prove our theorem. 

Assume $M$ is as above so that there exists a continuous map $g \colon S^{k-1} \to \Lambda^1(p,q)$ representing a nontrivial class in $\pi_{k-1}(\Lambda(p,q))=\pi_{k} (M)=\mathbb{Z}$ with $k = 2, 4$ or $8$, respectively. An important lemma is as follows:

\begin{lemma}
Let $p,q \in M^n$ be diameter realizing points so that $q \in C(p)$. Assume there is a smooth subset $N^{n-k} \subseteq C(p)$ with $d(N,p)=1$ and $q \in N$. Then every geodesic with starting velocity normal to $N$ at $q$ is a minimizing geodesic from $q$ to $p$.
\end{lemma}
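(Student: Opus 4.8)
The plan is to recast the statement as a surjectivity claim and establish it in two stages. Fix the orthogonal splitting $T_qM = T_qN \oplus \nu_qN$, where $\nu_qN$ is the $k$-dimensional normal space, with unit normal sphere $U\nu_qN \cong S^{k-1}$. Since a geodesic is determined by its initial velocity, the Lemma is equivalent to the assertion that \emph{every} unit vector in $\nu_qN$ is the initial velocity of a minimizing geodesic from $q$ to $p$. First I would show that the minimizing geodesics already in hand all point in normal directions; then that they sweep out the entire normal sphere.

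For the first stage I would use the first variation of arc length. Because $N \subseteq C(p)$ and $d(N,p)=1$, the function $r=d(\cdot,p)$ is constant, equal to the diameter, along $N$. Let $\sigma\colon[0,1]\to M$ be any unit-speed minimizing geodesic from $p$ to $q$, and let $c(s)$ be a curve in $N$ with $c(0)=q$ and $\dot c(0)=w\in T_qN$. For any smooth variation $\sigma_s$ with $\sigma_s(0)=p$ and $\sigma_s(1)=c(s)$, the first variation formula gives $\frac{d}{ds}L(\sigma_s)\big|_{0}=\langle\dot\sigma(1),w\rangle$, since $\sigma$ is a geodesic and $p$ is fixed. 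As $L(\sigma_s)\ge d(p,c(s))=1=L(\sigma_0)$, the value $s=0$ is a minimum and the derivative vanishes; since $w\in T_qN$ is arbitrary, $\dot\sigma(1)\perp T_qN$. Reversing $\sigma$, the initial velocity at $q$ of every minimizing geodesic to $p$ lies in $\nu_qN$. Applying this to the reversed family of Proposition~\ref{prop:morse} produces a continuous map $\Phi\colon S^{k-1}\to U\nu_qN\cong S^{k-1}$ recording these initial velocities.

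The second stage, which I expect to be the crux, is to show $\Phi$ is surjective via a degree computation. Viewing $g$ as a map $S^{k-1}\times[0,1]\to M$ and collapsing the two ends (all paths start at $p$, all end at $q$) yields $\bar g\colon S^k\to M$ whose class is the image of $[g]$ under the suspension isomorphism, hence a generator of $\pi_k(M)\cong H_k(M;\mathbb{Z})$ by Hurewicz. A minimizing geodesic from $p$ meets $C(p)$ only at its endpoint, so $\bar g$ meets $N$ only at $q$ (its other cone point is $p\notin N$), and the local intersection multiplicity there is exactly $\deg\Phi$. Therefore
\[
\deg\Phi \;=\; \bar g_*[S^k]\cdot[N] \;=\; \langle \mathrm{PD}[N],\,\text{generator of }H_k(M)\rangle .
\]
If this integer is nonzero, then $\Phi$ is surjective, every unit normal direction is realized by a minimizing geodesic to $p$, and the Lemma follows.

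The main obstacle is precisely to see that this intersection number is $\pm 1$, equivalently that $[N]$ is homologically essential and $\mathrm{PD}[N]$ generates $H^k(M)$. I would deduce this from the ring structure furnished by Bott--Samelson together with the hypotheses on $N$ (a smooth codimension-$k$ subset of the cut locus at distance the diameter); the special values $k=2,4,8$ enter here, since these are exactly the dimensions in which the relevant normal spheres occur as Hopf fibers, so that $\Phi$ is modeled on the associated Hopf map of degree one. The remaining points — continuity of $\Phi$, transversality of $\bar g$ to $N$ at $q$, and orientation bookkeeping in the intersection count — I expect to be routine.
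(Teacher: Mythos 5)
Your first stage (first variation forces minimizing geodesics from $p$ to arrive orthogonally to $N$ at $q$, giving a continuous map $S^{k-1}\to UT^{\perp}_qN$) is exactly the paper's first step. But your second stage has a genuine gap, and you have located it yourself without resolving it: the claim that the intersection number $\bar g_*[S^k]\cdot[N]$ is $\pm 1$ — equivalently that $\mathrm{PD}[N]$ pairs nontrivially with the spherical generator of $H_k(M)$ — does not follow from Bott--Samelson or from the hypotheses. Bott--Samelson constrains the cohomology ring of $M$, not the homology class of $N$: the lemma assumes only that $N$ is \emph{some} smooth closed codimension-$k$ subset of $C(p)$ at distance $1$ from $p$, and a priori $[N]\in H_{n-k}(M)$ could be anything, including a class pairing to zero (it need not even be orientable, which your integer-valued intersection count also requires). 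Knowing that $[N]$ is homologically essential with the right pairing is essentially equivalent to the theorem's conclusion $N=C(p)$, so your plan is circular at its crux. The remark that $\Phi$ is ``modeled on the associated Hopf map'' does not help: $\Phi$ is a self-map of $S^{k-1}$, while Hopf maps live between spheres of different dimensions, and in any case the model geometry of the standard metrics cannot be invoked for the unknown Besse metric.

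The paper's proof avoids any input about $[N]$ by running the logic in the opposite direction, via obstruction theory rather than degree theory. It extends the geodesic correspondence to a continuous map $G\colon UT^{\perp}_qN\to\Lambda(p,q)$ with $G\circ h=g$, building $G$ handle by handle over a handle decomposition of the complement of a neighborhood of $h(S^{k-1})$; each extension over a $j$-handle ($0\le j\le k-2$) is unobstructed because $\pi_{j-1}(\Lambda^1(p,q))\simeq\pi_j(M)=0$ for $j\le k-1$ — this low-dimensional connectivity of $\mathbb{C}P^{n/2}$, $\mathbb{H}P^{n/4}$, $CaP^2$ is where the homotopy-equivalence hypothesis and the special values $k=2,4,8$ actually enter, not through any Hopf-fiber or cup-product structure. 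Then if $h$ missed a point of $UT^{\perp}_qN\cong S^{k-1}$, composing $G$ with the retraction of the punctured sphere to a point would null-homotope $g$, contradicting the nontriviality of $[g]\in\pi_{k-1}(\Lambda(p,q))\cong\pi_k(M)=\mathbb{Z}$ furnished by Proposition~\ref{prop:morse}. Note that this argument yields $\deg h\neq 0$ as a \emph{consequence} (if $\deg h=0$ then $h$, hence $g=G\circ h$, would be null-homotopic), whereas your plan needed it as an input. To repair your write-up you would either have to supply an independent proof that $[N]$ is essential — which I do not believe the hypotheses permit — or replace the degree computation with an extension argument of this kind.
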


\begin{proof}

Take a smooth path $q(t)$ in $N$ and consider $\gamma$ a minimizing geodesic between $p$ and $q=q(0)$. Consider $\gamma_t$ a smooth $1$-parameter family of paths so that the endpoints of $\gamma_t$ are p and $q(t)$. Since $q(t)\in N$ it follows that $\ell(\gamma_t)\geq 1$, and by assumption $\ell(\gamma_0)=1$. Then by the first variational formula of length it follows that $\gamma'$ at $q$ is orthogonal to $q'(0)$.

Denoting by $UT^{\perp}_qN$ to the unit orthogonal vectors to $N$ at $q$ in $M$, then from the previous paragraph the map $g\colon S^{k-1} \to \Lambda^1(p,q)$ factors through the exponential map as a continuous map $h:S^{k-1}\rightarrow UT^{\perp}_q N$. The lemma follows from showing that $h$ is surjective. We will first see that there exists a continuous map $G\colon UT^{\perp}_qN \rightarrow \Lambda(p,q)$ so that $G\circ h =g$.

$G$ is already defined on $h(S^{k-1})$ by the exponential map. Given $\epsilon$ there exists an open neighbourhood $U \subseteq UT^{\perp}_qN$ of $h(S^{k-1})$ so that for any $u\in U$ the unit length geodesic $\gamma_u$ with starting velocity $u$ ends at distance less than $\epsilon$ from p. By taking $\epsilon$ small and concatenating $\gamma_u$ with the minimizing segment between $\gamma_u(1)$ and $p$, we can extend $G$ to be defined on the open set $U$.

Consider now $K \subset U$ compact manifold with boundary so that $K \supset h(S^{k-1})$, and fix a handle decomposition of $UT^{\perp}_qN \setminus K$. We will extend $G$ inductively over the handle decomposition of $UT^{\perp}_qN \setminus K$. For each $j$-handle $D^j\times D^{k-j-1}$ ($0\leq j\leq k-2$) we have $G$ already defined on the boundary component $S^{j-1}\times D^{k-j-1}$. By contracting in the $D^{k-j-1}$ direction, we can assume that $G$ in $S^{j-1}\times D^{k-j-1}$ only depends on the first coordinate. Hence extending $G$ reduces to knowing whether $G(S^{j-1})$ is a trivial homotopy class in $\Lambda^1(p,q)$. But since $\pi_{j-1}(\Lambda^1(p,q)) \simeq \pi_j(M)$, then for $0\leq j\leq k-1$ we have that $G(S^{j-1})$ is homotopically trivial since by assumption the homotopy groups themselves are trivial.

Having defined $G$ we are ready to see that $h$ is surjective. If this is not the case, there there is a point $p$ in $UT_q^\perp N$ not in the image of $h$. By composing a point retraction of $UT_q^\perp N\setminus\lbrace p\rbrace$ between $G$ and $h$, we will have that $g:S^{k-1}\rightarrow\Lambda(p,q)$ is homotopic to a constant map, which contradicts the fact that $g$ represented a non-trivial homotopy class.
\end{proof}

We are now ready to prove Theorem~\ref{main1}. Given any $q\in N \subseteq C(p)$ and $v\in UT^\perp_q N$, we know by the lemma that ${\rm exp}(tv)$ is a minimizing geodesic from $q$ to $p$, so that $\lbrace {\rm exp}(tv)\,|\, 0\leq t<1 \rbrace \cap C(p) =\lbrace q\rbrace$. Denoting the normal vectors to $N$ of norm less than $1$ by $T^\perp_1 N$, we have that the exponential map ${\rm exp} \colon T^\perp_1 N \rightarrow M$ satisfies ${\rm exp}(T^\perp_1 N) \cap C(p) = N$. Since $D({\rm exp})$ is an isomorphism at the zero section of $T^\perp_1 N$, then the set ${\rm exp}(T^\perp_1 N)$
is open around $N$ by the Inverse Function Theorem. Since the cut locus $C(p)$ is connected (c.f.~Chapter 13, Section 2 of \cite{DoCarmo}) and ${\rm exp}(T^\perp_1 N) \cap C(p) = N$, it follows that $C(p)=N$ and the theorem is proved.

\section{Pinched Curvature}

Here we address the question as to whether there exist constants $c(n)$ such that the length of the shortest closed geodesic $L(M^n)$ on a closed Riemannian manifold $M^n$ is bounded above by $c(n) D(M^n)$, where $D(M^n)$ is the diameter of the manifold. 

In addition to the quick bound of $2D(M^n)$ for non-simply connected manifolds, curvature free bounds on the length of the shortest closed geodesic in terms of diameter have only been given for manifolds homeomorphic to the 2-sphere. Croke \cite{croke1} provided the first such bound, proving that $L(S^2,g) \leq 9D(S^2,g)$. Maeda \cite{maeda} improved Croke's techniques to achieve the bound $L(S^2,g) \leq 5D(S^2,g)$. Nabutovsky and Rotman \cite{NR2002} and independently Sabourau \cite{Sab} developed new techniques to prove that $L(S^2,g) \leq 4D(S^2,g)$. By imposing bounds on curvature, the authors have previously shown in \cite{Ade} that $L(S^2,g) \leq 3D(S^2,g)$ for non-negative metrics on the 2-sphere and that $L(S^2,g) \leq \frac{2}{\sqrt{\delta}}D(S^2,g)$ for $\delta>.83$ pinched metrics on the 2-sphere.

One might conjecture that $L(M^n) \leq 2D(M^n)$ for all closed Riemannian manifolds $M^n$. This conjecture turns out to be overly optimistic, even for the case of the 2-sphere. Balacheff, Croke, and Katz \cite{bck} demonstrated the existence of Zoll spheres with $L(S^2, Zoll) > 2D(S^2, Zoll)$. Recall that a Zoll sphere is a metric on the 2-sphere all of whose geodesics are closed and of the same length. These examples are not constructive, and one can not say how much longer than $2D(M^n, g)$ the shortest closed geodesic might be.

In addition to addressing this question for simply connected manifolds with metrics of pinched curvature, our Theorem \ref{main} offers insight into the Balacheff, Croke, and Katz \cite{bck} examples cited previously. For these examples we can now add an upper bound on the length of the shortest closed geodesic: $$2D(S^2, Zoll) < L(S^2, Zoll) < \frac{2}{\sqrt{\delta}} D(S^2, Zoll). $$

The proof of Theorem \ref{main} follows rather quickly by combining an upper bound on the length of the shortest closed geodesic in the pinched curvature setting due to Ballmann, Thorbergsson and Ziller \cite[Theorem 1.4]{btz1} with Klingenberg's lower bound on injectivity radius and therefore diameter. 

\begin{theorem}[\cite{btz1}, Theorem 1.4]
If the sectional curvature of $M^n$ satisfies $0< \delta \leq K \leq 1$, then there exists a closed geodesic with length $\leq \frac{2\pi}{\sqrt{\delta}}$. If $M^n$ is not homotopy equivalent to $S^n$ then length $\leq \frac{\pi}{\sqrt{\delta}}$.
\end{theorem}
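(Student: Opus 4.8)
The statement is a pure existence-and-length result, so my plan is Morse--Lusternik--Schnirelmann theory on the free loop space $\Lambda M$ with the energy functional $E(c)=\int_0^1\|\dot c\|^2\,dt$. Since $M$ is compact, $E$ satisfies the Palais--Smale condition (C); its critical points are exactly the closed geodesics together with the constant loops, which fill out a copy of $M$ at the level $E=0$. The minimax principle then applies: for any nonzero class $\alpha\in H_d(\Lambda M)$ the value $\inf_{z\in\alpha}\max_z E$ is a critical value realized by a critical point whose index and nullity satisfy $\mathrm{ind}\le d\le\mathrm{ind}+\mathrm{null}$. In the degenerate situations (for instance the Fubini--Study models, whose closed geodesics come in critical submanifolds) this has to be run in its Morse--Bott form. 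Positive curvature forces $\pi_1(M)$ to be finite, and if $\pi_1(M)\ne 0$ then $M$ is already not homotopy equivalent to $S^n$ and a length-minimizer in a nontrivial free homotopy class is a closed geodesic of index $0$; such a minimizer has no interior conjugate point (an interior conjugate point would let me shorten it within its class), so by Rauch comparison its length is at most the conjugate radius, $\le \pi/\sqrt\delta$. It therefore suffices to treat the simply connected case, which I do from now on.

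The class that drives the argument comes from the lowest nonvanishing homotopy group. Let $j$ be minimal with $\pi_j(M)\ne 0$; then $2\le j\le n$, and by Hurewicz one has $j\le n-1$ precisely when $M$ is \emph{not} a homotopy sphere (a simply connected, $(n-1)$-connected, closed $n$-manifold is a homotopy sphere). The adjoint class in $\pi_{j-1}(\Omega M)$ pushes forward into $H_{j-1}(\Lambda M)$ and, since $j\ge 2$, does not come from the constant loops, so its minimax value is strictly positive and produces a \emph{nonconstant} closed geodesic $\gamma$ with $\mathrm{ind}(\gamma)\le j-1$. Thus I obtain a closed geodesic whose index is at most $n-2$ when $M$ is not a homotopy sphere, and at most $n-1$ in the homotopy-sphere case $j=n$.

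To convert the index bound into a length bound I would use Rauch comparison against the round sphere $S^n(\delta)$. The lower bound $K\ge\delta$ forces a conjugate point within distance $\pi/\sqrt\delta$ along every geodesic; iterating along $\gamma$ shows that a long closed geodesic must accumulate interior conjugate points, each of which contributes to the index, while the upper bound $K\le 1$ keeps conjugate points at least $\pi$ apart. Feeding $\mathrm{ind}(\gamma)\le j-1$ into this conjugate-point count is what caps the length: the homotopy-sphere case admits at most one interior conjugate batch and yields $L(\gamma)\le 2\pi/\sqrt\delta$, matching the great circle of $S^n(\delta)$, while the non-sphere case should push $\gamma$ below the first conjugate distance and hence give $L(\gamma)\le\pi/\sqrt\delta$.

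The hard part is precisely the sharp constant, and in particular the factor-$2$ improvement in the non-sphere case. In a merely $\delta$-pinched manifold the conjugate points need not carry the full multiplicity $n-1$ that they enjoy on the round sphere --- as the Fubini--Study metric already shows, where a conjugate point of multiplicity one appears at half the length --- so a naive estimate of the form $\mathrm{ind}(\gamma)\ge (n-1)\cdot(\#\,\text{interior conjugate times})$ is false, and crude counting only delivers $2\pi/\sqrt\delta$ in every case. Extracting the sharp $\pi/\sqrt\delta$ bound for simply connected non-spheres (where $\gamma$ is contractible and of positive index, so the clean index-$0$ reasoning above is unavailable) is the real content: it requires the finer input of Ballmann--Thorbergsson--Ziller, namely a precise description of the degrees in which $H_*(\Lambda M)$ is supported, Bott's iteration formulas for the periodic index, and a sharp comparison between the periodic index of a closed geodesic and its conjugate-point structure. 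Controlling how this loop-space topology is distributed across the energy levels against the curvature-forced growth of the index is the technical heart of the argument and the step I expect to be the main obstacle; along the way I would also have to discharge the Morse--Bott degeneracies and check in each case that the minimax value is genuinely positive, so that the geodesic produced is nonconstant.
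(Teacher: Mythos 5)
You should first note a mismatch with the assignment: the paper does not prove this statement at all. It is quoted verbatim as an external result of Ballmann--Thorbergsson--Ziller \cite[Theorem 1.4]{btz1} and used as a black box in the proof of Theorem~\ref{main}. So there is no internal proof to compare against; your sketch can only be judged against the known BTZ-style argument, whose framework you have in fact correctly identified: take the minimal $j$ with $\pi_j(M)\neq 0$, run minimax on the corresponding class in the free loop space to produce a nonconstant closed geodesic of index at most $j-1$, and convert the index bound into a length bound by comparison.

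The genuine gap in your proposal is that you declare false, and therefore unavailable, precisely the comparison estimate that closes the argument. The Morse--Schoenberg comparison theorem says: if $K\ge\delta$ and a geodesic segment has length strictly greater than $k\pi/\sqrt{\delta}$, then its fixed-endpoint index is at least $k(n-1)$ --- each comparison interval of length $\pi/\sqrt{\delta}$ contributes the \emph{full} multiplicity $n-1$, not multiplicity one. Your Fubini--Study objection conflates two different phenomena: there $\delta=1/4$, so $\pi/\sqrt{\delta}=2\pi$, the early conjugate point of multiplicity one occurs at distance $\pi<2\pi$ (conjugate points may arrive \emph{early} with low multiplicity), and the closed geodesics have length exactly $2\pi$, not greater, so Morse--Schoenberg is not violated. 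Once you have this lemma, together with the elementary fact that the periodic index dominates the fixed-endpoint index of the underlying segment (vector fields vanishing at the base point are periodic), your own setup finishes in two lines: if $M$ is not a homotopy sphere then $j\le n-1$ by Hurewicz, so $\mathrm{ind}(\gamma)\le n-2<n-1$ and hence $L(\gamma)\le\pi/\sqrt{\delta}$; in general $\mathrm{ind}(\gamma)\le n-1<2(n-1)$ for $n\ge 2$, hence $L(\gamma)\le 2\pi/\sqrt{\delta}$. So what you flagged as ``the technical heart'' needing Bott iteration formulas and the fine structure of $H_*(\Lambda M)$ is in fact a standard comparison lemma; the points that genuinely require care are the ones you mentioned only in passing --- handling degenerate critical manifolds in the minimax (Morse--Bott or Gromoll--Meyer arguments) and verifying the minimax value is positive so the geodesic is nonconstant.
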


\begin{lemma}[Klingenberg]
If the sectional curvature of $M^n$ satisfies $0< \delta \leq K \leq 1$, then either
\begin{enumerate}
    \item $i(M^n) \geq \pi$, or
    \item there exists a closed geodesic with length equal to $2i(M^n)$
    \end{enumerate}
    where $i(M^n)$ denotes the injectivity radius of $M^n$. 
\end{lemma}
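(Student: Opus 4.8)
The plan is to deduce this statement from the classical Klingenberg injectivity radius estimate, organizing the proof around the structure of the cut locus together with the curvature hypothesis $K \le 1$. Since $K \ge \delta > 0$, Bonnet--Myers guarantees that $M^n$ is compact, so the function $p \mapsto i(p) = d(p, C(p))$ is continuous and attains its minimum $i(M^n)$ at some point $p_0$, realized by a nearest cut point $q \in C(p_0)$ with $d(p_0, q) = i(M^n) =: m$. The whole argument then reduces to analyzing this pair $(p_0, q)$ via the cut-point dichotomy (c.f.~Chapter 13 of \cite{DoCarmo}): a nearest cut point $q$ of $p_0$ is either (a) conjugate to $p_0$ along a minimizing geodesic, or (b) joined to $p_0$ by at least two distinct minimizing geodesics.

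First I would dispose of the conjugate case. If $q$ is conjugate to $p_0$ along a minimizing geodesic $\gamma$ of length $m$, then $\gamma$ has a conjugate point at parameter $m$; by Rauch comparison with the round sphere of curvature $1$ (equivalently the standard Bonnet estimate), the hypothesis $K \le 1$ forces the first conjugate point to occur no earlier than parameter $\pi$, whence $m \ge \pi$ and we land in alternative (1).

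The heart of the matter is case (b), and this is where I expect the main obstacle. Let $\gamma_1, \gamma_2$ be two distinct minimizing geodesics from $p_0$ to $q$, parametrized by arc length on $[0,m]$, and set $v_i = \gamma_i'(m)$. I claim $v_1 = -v_2$. Suppose not; then $w := -(v_1+v_2)/|v_1+v_2|$ is well defined with $\langle w, v_i\rangle < 0$ for $i=1,2$. Since $q$ is not conjugate to $p_0$ along either geodesic, the inverse function theorem provides smooth families of geodesics $\gamma_i^s$ from $p_0$ to $\sigma(s) := \exp_q(sw)$ with $\gamma_i^0 = \gamma_i$, and these stay distinct for small $s$. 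By the first variation of arc length, $\tfrac{d}{ds}\big|_{0} L(\gamma_i^s) = \langle w, v_i\rangle < 0$, so for small $s>0$ both $\gamma_1^s$ and $\gamma_2^s$ reach $\sigma(s)$ with length strictly less than $m = i(p_0)$. But any geodesic issuing from $p_0$ of length less than $i(p_0)$ is the unique minimizing geodesic to its endpoint, by the diffeomorphism property of $\exp_{p_0}$ on the ball of radius $i(p_0)$; two distinct such geodesics therefore cannot share the endpoint $\sigma(s)$, a contradiction. This establishes $v_1 = -v_2$, i.e.~$\gamma_1$ and $\gamma_2$ meet smoothly at $q$.

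To upgrade this geodesic loop to a genuine smooth closed geodesic I would exploit the symmetry of the cut locus. Because $q \in C(p_0)$ we also have $p_0 \in C(q)$, and since $m = i(M^n) \le i(q) \le d(q,p_0) = m$, it follows that $i(q) = i(M^n)$ and $p_0$ is a nearest cut point of $q$. Running the same dichotomy at $q$ with nearest cut point $p_0$ either lands us again in alternative (1) (if $p_0$ is conjugate to $q$), or yields smoothness of $\gamma_1, \gamma_2$ at $p_0$, namely $\gamma_1'(0) = -\gamma_2'(0)$. In the latter subcase the concatenation of $\gamma_1$ with the reverse of $\gamma_2$ is smooth at both $q$ and $p_0$, hence a closed geodesic, and its length is $2m = 2\,i(M^n)$, giving alternative (2). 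The main obstacle throughout is the perturbation argument of case (b); the conjugate-point estimate and the symmetry step are comparatively routine applications of Rauch comparison and the equivalence $q \in C(p_0) \Leftrightarrow p_0 \in C(q)$.
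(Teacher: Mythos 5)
Your proof is correct, and it is essentially the argument the paper is relying on: the paper states this lemma without proof, attributing it to Klingenberg, and your write-up is the classical proof (cf.\ Cheeger--Ebin, Lemma 5.6) --- $K \le 1$ plus Rauch pushes conjugate points to distance $\ge \pi$, and the first-variation perturbation at a pair $(p_0,q)$ realizing $i(M^n)$, together with the symmetry $q \in C(p_0) \Leftrightarrow p_0 \in C(q)$ forcing $i(q) = i(M^n)$, makes the two minimizing geodesics close up smoothly at both endpoints into a closed geodesic of length $2i(M^n)$. The only cosmetic point is that the cut-point dichotomy is not exclusive, so in case (b) you should state explicitly that any conjugacy along a minimizing geodesic has already been absorbed into case (a), which your phrase ``since $q$ is not conjugate to $p_0$ along either geodesic'' implicitly assumes.
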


\begin{proof}[Proof of Theorem \ref{main}]

rem can only occur if $D(M^n) = i(M^n) = \pi$ which is the Blaschke condition.

We first establish the inequalities. Applying Klingenberg's Lemma we consider the two cases.
In the first case the bounds (both in terms of $\pi$) on $D(M^n)$ and $L(M^n)$ allow us to relate these quantities. We combine the inequality $D(M^n) \geq i(M^n) \geq \pi$ with the bounds on length from \cite[Theorem 1.4]{btz1} to yield our theorem.
The second case is even quicker, as Klingenberg already provides the bound $L(M^n)=2i(M^n) \leq 2D(M^n)$. When $M^n$ is not homotopy equivalent to $S^n$ the quarter-pinched sphere theorem implies $\delta \leq 1/4$, and we conclude that $2 \leq 1/\sqrt{\delta}$ and $L(M^n, g) \leq \sfrac{1}{\sqrt{\delta}} D(M^n, g)$.

We now focus on the equality setting. Our first goal is to establish (in both cases of Klingenberg's Lemma) that $M^n$ is a Blaschke manifold (i.e.~that injectivity radius equals diameter). 
In the first case, when $D(M^n) \geq i(M^n) \geq \pi$, we see that equality in our theorem can only occur if $D(M^n) = i(M^n) = \pi$ which is the Blaschke condition. 
In the second case, we combine the inequality $L(M^n)=2i(M^n) \leq 2D(M^n)$ with the fact that $2D(M^n) \leq L(M^n)$ (which follows from equality in our theorem and the respective upper bounds on $\delta$) to again yield the Blaschke condition. 

The final step of the equality proof is to note that for Blaschke manifolds all geodesics are periodic and of length twice the diameter. Thus when $M^n$ is homotopy equivalent to $S^n$ we have $\delta=1$ and otherwise we have $\delta=1/4$. The equality result then follows from rigidity of the sphere when $\delta=1$ and Berger's minimal diameter theorem \cite[Theorem 6.6]{Che} when $\delta=1/4$.
\end{proof}

\end{document}